\newtheorem{theorem}{Theorem}[section]
\newtheorem{lemma}[theorem]{Lemma}
\newtheorem{cor}[theorem]{Corollary}
\DeclareMathOperator{\mad}{mad}
\title{Coloring the square of a sparse graph $G$ with almost $\Delta(G)$ colors}
\author{Matthew P. Yancey \thanks{Institute for Defense Analyses / Center for Computing Sciences (IDA / CCS), mpyance@super.org}}
\begin{document}
\maketitle

\begin{abstract}
For a graph $G$, let $G^2$ be the graph with the same vertex set as $G$ and $xy \in E(G^2)$ when $x \neq y$ and $d_G(x,y) \leq 2$.
Bonamy, L\'ev\^{e}que, and Pinlou conjectured that if  $\mad (G) < 4 - \frac{2}{c+1}$ and $\Delta(G)$ is large, then $\chi _\ell(G^2) \leq \Delta(G) + c$.
We prove that if $c \geq 3$, $\mad (G) < 4 - \frac{4}{c+1}$, and $\Delta(G)$ is large, then $\chi _\ell(G^2) \leq \Delta(G) + c$.
Dvo\v{r}\'ak, Kr\'{a}\soft{l}, Nejedl\'{y}, and \v{S}krekovski conjectured that $\chi(G^2) \leq \Delta(G) +2$ when $\Delta(G)$ is large and $G$ is planar with girth at least $5$; our result implies $\chi (G^2) \leq \Delta(G) +6$.
\end{abstract}

\section{Motivation}

For a fixed graph $G$, let $G^2$ be the graph such that $V(G^2) = V(G)$ and $E(G^2) = E(G) \cup \{uw: u \neq w, N(u) \cap N(w) \neq \emptyset\}$.
A \emph{$2$-distance coloring} of $G$ is a proper coloring of $G^2$; a \emph{$2$-distance list coloring} is a list coloring of $G^2$.
Let $\chi^2(G) = \chi(G^2)$ and $\chi _\ell^2(G) = \chi _\ell(G)$.
The study of these chromatic numbers has been spurred by three major conjectures.
Wegner \cite{W} conjectured that when $\Delta(G) \geq 8$, then $\chi^2(G) \leq \lceil 1.5\Delta(G) \rceil + 1$.
The introduction to \cite{CS} contains a survey on progress towards solving this conjecture.
Kostochka and Woodall \cite{KW} conjectured that $\chi _{\ell}^2(G) = \chi^2(G)$; recently this has been proven to not be true \cite{KP}.
The girth of a graph $G$ is the length of the shortest cycle in $G$ and is denoted $g(G)$.
Wang and Lih conjectured that there exists a function $M$ such that if $G$ is a planar graph with $g(G) \geq 5$ and $\Delta(G) \geq M(g(G))$, then $\chi^2(G) = \Delta(G)+1$.
Wang and Lih's conjecture is true only on the restricted domain $g(G) \geq 7$ \cite{BGINT,BIN}; if $g(G) = 6$ then the weaker result $\chi^2(G) \leq \Delta(G) +2$ \cite{DKNS} is true.
Dvo\v{r}\'ak, Kr\'{a}\soft{l}, Nejedl\'{y}, and \v{S}krekovski conjectured that $\chi^2(G) \leq \Delta(G) +2$ when $\Delta(G) > M$, $G$ is planar, and $g(G) = 5$ \cite{DKNS}.

Motivated by Wang and Li's conjecture, there have been a series of results showing that graphs with bounded maximum average degree have $\chi^2(G)$ close to $\Delta(G)$.
Let $n(G) = |V(G)|$ and $e(G) = |E(G)|$; the \emph{maximum average degree} of $G$, denoted by $\mad(G)$, is the maximum of $2\frac{e(H)}{n(H)}$, taken over all non-empty subgraphs $H\leq G$.
The family of planar graphs with girth $g$ is a subfamily of graphs with maximum average degree less than $\frac{2g}{g-2}$.
Dolama and Sopena \cite{DS} proved that if $\Delta(G) \geq 4$ and $\mad(G) < 16/7$, then $\chi^2(G) = \Delta(G) + 1$.
Bonamy, L\'ev\^{e}que, and Pinlou \cite{BLP} and independently Cranston and \v{S}krekovski \cite{CS} proved that there exists a function $M$ such that if $\mad(G) < 2.8 - \epsilon$ and $\Delta(G) > M(\epsilon)$, then $\chi _\ell^2(G) = \Delta(G) + 1$.
This was later improved by Bonamy, L\'ev\^{e}que, and Pinlou \cite{BLP2} that if $\mad(G) < 3 - \epsilon$ and $\Delta(G) > M(\epsilon)$, then $\chi _\ell^2(G) = \Delta(G) + 1$.
This is sharp: if we only assumed $\mad(G) < 3$, then this would imply Wang and Li's conjecture is true for girth $6$ graphs, which is a contradiction.

Bonamy, L\'ev\^{e}que, and Pinlou \cite{BLP} proved that if $\mad(G) \leq 4 - \frac{40}{c+16}$, then $\chi _\ell^2(G) \leq \Delta(G) +c$.
Charpentier (see \cite{BLP2}) gave a construction of a graph with $\mad(G) < 4 - \frac{2}{c+1}$ and $\chi _\ell^2(G) = \Delta(G) +c + 1$.
Bonamy, L\'ev\^{e}que, and Pinlou conjectured that Charpentier's construction is optimal \cite{BLP2} - that is they ask if it is true that $\mad(G) < 4 - \frac{2}{c+1} - \epsilon$ and $\Delta(G) > M(\epsilon)$ implies that $\chi _\ell^2(G) \leq \Delta(G) +c$?
Their result states that this conjecture is true when $c=1$; we provide the strongest progress yet for when $c \geq 3$.

\begin{theorem} \label{2main}
Let $c$ be a fixed number such that $c \geq 3$.
If $G$ is a graph such that $\mad (G) < 4 - \frac{4}{c+1} - \epsilon$ for some $\frac{4}{c(c+1)} > \epsilon > 0$, then 
$\chi ^2_\ell(G) \leq \max\{ \Delta(G) + c, 16 c^2 \epsilon^{-2} \}$.
\end{theorem}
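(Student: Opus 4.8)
\emph{Setup and easy reductions.} Fix $\Delta$ and set $L=\max\{\Delta+c,\,16c^{2}\epsilon^{-2}\}$; among all graphs $G$ with $\Delta(G)\le\Delta$, $\mad(G)<4-\frac{4}{c+1}-\epsilon$ and $\chi^{2}_{\ell}(G)>L$, take one with the fewest vertices. Then $\chi^{2}_{\ell}(H)\le L$ for every proper subgraph $H$, since $\Delta(H)\le\Delta$ and $\mad(H)\le\mad(G)$. The plan is the standard discharging route: first show that certain local configurations are forbidden in $G$, then derive a contradiction from $\sum_{v}d_{G}(v)=2e(G)<(4-\frac{4}{c+1}-\epsilon)\,|V(G)|$. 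For the reductions, note that any $v$ can be colored greedily last whenever $d_{G^{2}}(v)\le\sum_{u\sim v}d_{G}(u)<L$, so no such vertex exists. Since $\delta(G)\le\mad(G)<4$, some $v$ has $d_{G}(v)\le 3$, whence $3\Delta(G)\ge L\ge16c^{2}\epsilon^{-2}$; thus $\Delta:=\Delta(G)$ is large and $L\ge\Delta+c$. A degree-$1$ vertex would have $d_{G^{2}}\le\Delta<L$, so $\delta(G)\ge2$.

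\emph{Structure of degree-$2$ vertices.} Along any thread of consecutive degree-$2$ vertices, an internal vertex with both neighbors of degree $2$ has $d_{G^{2}}\le4<L$, so threads have at most two internal vertices; and a length-$2$ thread $x-v_{1}-v_{2}-y$ gives $d_{G^{2}}(v_{1})\le d_{G}(x)+2\le\Delta+2<\Delta+c\le L$, which is impossible once $c\ge3$. This is where the hypothesis $c\ge3$ enters the structural analysis, and it is the reason the problem becomes tractable in this range: every degree-$2$ vertex $v$ is nonadjacent to other degree-$2$ vertices, and its two neighbors $x,y$ have degree $\ge3$ with $d_{G}(x)+d_{G}(y)\ge d_{G^{2}}(v)\ge L$, so one of them has degree $\ge L/2$. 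Likewise every degree-$3$ vertex has a neighbor of degree $\ge L/3$. Call a vertex \emph{huge} if its degree is $\ge L/3$; then every vertex of degree $2$, and (when $c\ge4$) of degree $3$, has a huge neighbor.

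\emph{Bounding low-degree neighbors of huge vertices (the hard part).} For $c=3$ this already suffices: a degree-$2$ vertex needs only $4-1-2-\epsilon=1-\epsilon$ extra charge, so sending $1-\epsilon$ to a neighbor of degree $\ge L/2$ leaves that neighbor with at least $\epsilon$ times its degree, which exceeds $3-\epsilon$; and a degree-$3$ vertex cannot be the larger neighbor of a degree-$2$ vertex (that would force $\Delta\le3$), so it keeps charge $3>3-\epsilon$. For $c\ge4$ a degree-$2$ vertex needs about $2-\frac{4}{c+1}>1$ charge, which a huge vertex of degree near $\Delta$ cannot supply to each of its (up to $\Delta$) degree-$2$ neighbors. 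So for $c\ge4$ one needs reducible configurations bounding, for a vertex $z$ of degree $D$, the number of degree-$2$ and degree-$3$ neighbors it must support roughly in proportion to $D$ — for instance, that more than about $\frac{(D-4)(c+1)}{2(c-1)}$ degree-$2$ neighbors is impossible — together with a density bound (from $\mad(G)<4$) ruling out many parallel degree-$2$ threads between two huge vertices. Proving the former is a recoloring argument: delete the offending low-degree vertices (and, if necessary, $z$ itself), color the remainder by minimality, and extend — coloring the deleted degree-$2$ vertices by a Hall/system-of-distinct-representatives argument, and, when $z$ was deleted, re-inserting $z$ after a Kempe-type recoloring of $N_{G}(z)$ that vacates a color for $z$. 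I expect this step to be the main obstacle: since $d_{G^{2}}(v)\ge L$ holds tightly at each deleted degree-$2$ vertex $v$, extending to $v$ requires engineering coincidences among the colors on $N_{G}(z)$, and this is presumably where the bound weakens from the conjectured $4-\frac{2}{c+1}$ to $4-\frac{4}{c+1}$.

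\emph{Discharging.} Assign $\mu(v)=d_{G}(v)$; since $\sum_{v}\mu(v)=2e(G)<(4-\frac{4}{c+1}-\epsilon)\,|V(G)|$, it suffices to move charge so that every vertex finishes with at least $4-\frac{4}{c+1}-\epsilon$, a contradiction. Each deficient vertex — of degree $2$, and of degree $3$ when $c\ge4$ — sends a request equal to its deficiency to a huge neighbor (splitting among its huge neighbors if it has several), and every vertex honors every request it receives. Vertices of degree $\ge4$ that are not huge, and all degree-$3$ vertices when $c=3$, neither give nor receive and already have charge at least $4-\frac{4}{c+1}-\epsilon$; deficient vertices end exactly at $4-\frac{4}{c+1}-\epsilon$; and huge vertices keep enough charge because the recoloring lemmas cap the total request reaching them strictly below their surplus, with the gap $L-\Delta$ and the slack $\epsilon$ — kept positive by the hypothesis $\epsilon<\frac{4}{c(c+1)}$ — absorbing the lower-order errors (such as the $O(D^{2}/L)$ losses incurred when a degree-$2$ vertex's request is split proportionally). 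This contradiction completes the proof.
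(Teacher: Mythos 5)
Your proposal has two genuine gaps. First, the foundational reduction ``any $v$ can be colored greedily last whenever $d_{G^2}(v)<L$, so no such vertex exists'' is not valid as stated: a proper list coloring of $(G-v)^2$ need not be proper on $G^2$ restricted to $V(G)-v$, because two vertices whose only common neighbor is $v$ (e.g.\ the two neighbors $x,y$ of a degree-$2$ vertex $v$) are adjacent in $G^2$ but possibly not in $(G-v)^2$, and coloring $v$ afterwards does not repair such a clash. The paper has to work around exactly this: in Lemma \ref{either big or both type ones} it first \emph{recolors} $x$ to separate it from $y$ (possible only because $x$ is type two, i.e.\ has fewer than $K_c(G)$ conflicts), and in Lemma \ref{2 by 2} it deletes \emph{both} adjacent degree-$2$ vertices so that every distance-two pair whose connecting path is destroyed contains a deleted vertex. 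Consequently your claim that every degree-$2$ vertex has a neighbor of degree at least $L/2$ does not follow; the paper only derives the weaker (but sufficient) dichotomy that a degree-$2$ vertex either has two type-one neighbors or has a massive neighbor (degree at least $\sqrt{K_c(G)}$).

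Second, for $c\geq 4$ you explicitly leave the main structural step unproven (``I expect this step to be the main obstacle''), and the step you propose --- bounding the number of degree-$2$ and degree-$3$ neighbors of a high-degree vertex in proportion to its degree --- is not what the paper does and is not established to be reducible. The paper never limits how many low-degree neighbors a massive vertex has; instead a massive vertex sends $1-\epsilon/c$ to \emph{every} neighbor, which it can afford precisely because its degree is at least $\sqrt{K_c(G)}\geq 4c/\epsilon$, and the deficiency of each degree-$2$ vertex is split between its two neighbors, each contributing roughly $1-\frac{2}{c+1}$. The solvency of the non-massive contributor is guaranteed by the type-one/type-two classification together with Lemma \ref{small is type one} (every vertex of degree between $3$ and $c-1$ is adjacent to a massive vertex), whose proof is the real technical heart of the paper: a vertex-splitting reduction replacing $u$ by $\binom{y}{2}$ degree-$2$ vertices, a nonstandard minimality order on $(n_1+n_2,e)$ that makes the surgery a legitimate induction step, and a potential-function computation verifying that the maximum average degree bound survives. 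None of this machinery, nor a workable substitute, appears in your proposal, so the argument is incomplete even for $c=3$ and essentially absent for $c\geq 4$.
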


If one could omit the $\epsilon$ term, then the case $c=2$ of Bonamy, L\'ev\^{e}que, and Pinlou's conjecture would imply Dvo\v{r}\'ak, Kr\'{a}\soft{l}, Nejedl\'{y}, and \v{S}krekovski's conjecture.
Charpentier's construction is not planar when $c \geq 2$ (although the length of the shortest cycle is $5$).
Our result is strong enough to provide a partial result towards Dvo\v{r}\'ak, Kr\'{a}\soft{l}, Nejedl\'{y}, and \v{S}krekovski's conjecture.
Wang and Lih \cite{WL} proved that if $G$ is a planar graph with $g(G) \geq 5$, then $\chi^2_\ell(G) \leq \Delta(G) + 16$.

\begin{cor}
If $G$ is a planar graph with $g(G) \geq 5$ and $\Delta(G) \geq 63500$, then $\chi^2_\ell(G) \leq \Delta(G) + 6$.
\end{cor}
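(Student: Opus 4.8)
The plan is to obtain the corollary as a direct specialization of Theorem~\ref{2main} with $c = 6$. The one input from graph theory is the standard bound $\mad(G) < \tfrac{10}{3}$ for a planar graph $G$ of girth at least $5$: any subgraph $H \leq G$ is planar and is either a forest or has girth at least $5$, so by Euler's formula $e(H) \leq \tfrac{5}{3}\big(n(H)-2\big) < \tfrac{5}{3}n(H)$, hence $2e(H)/n(H) < \tfrac{10}{3}$; maximizing over $H$ gives the claim.

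Now take $c = 6$, so that the additive term will be exactly $c = 6$. Here $4 - \tfrac{4}{c+1} = \tfrac{24}{7}$ and $\tfrac{4}{c(c+1)} = \tfrac{2}{21}$, and conveniently $\tfrac{24}{7} - \tfrac{10}{3} = \tfrac{2}{21}$. Thus for \emph{any} $\epsilon$ with $0 < \epsilon < \tfrac{2}{21}$ we have both $\epsilon < \tfrac{4}{c(c+1)}$ (the range the theorem requires) and $\mad(G) < \tfrac{10}{3} < \tfrac{24}{7} - \epsilon = 4 - \tfrac{4}{c+1} - \epsilon$ (the sparsity hypothesis), so Theorem~\ref{2main} applies and yields $\chi^2_\ell(G) \leq \max\{\Delta(G) + 6,\ 16 \cdot 36 \cdot \epsilon^{-2}\}$.

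It remains to choose $\epsilon$ in that range so that the second term of the maximum does not dominate, i.e. so that $576\,\epsilon^{-2} \leq \Delta(G) + 6$, equivalently $\epsilon \geq 24/\sqrt{\Delta(G)+6}$. Such a choice exists exactly when $24/\sqrt{\Delta(G)+6} < \tfrac{2}{21}$, i.e. when $\Delta(G) + 6 > 252^2 = 63504$; the hypothesis $\Delta(G) \geq 63500$ suffices, and then the maximum equals $\Delta(G) + 6$. There is no real obstacle beyond this bookkeeping: $\epsilon$ is squeezed between the upper bound $\tfrac{2}{21}$ (forced both by the girth-$5$ sparsity estimate and by the admissible range in Theorem~\ref{2main}) and the lower bound $24/\sqrt{\Delta(G)+6}$ (forced by the wish to absorb the term $16 c^2 \epsilon^{-2}$), and one checks the two are compatible in the stated range of $\Delta(G)$. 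The bound $\chi^2_\ell(G) \leq \Delta(G) + 16$ of Wang and Lih quoted above plays no role here; it is relevant only for smaller maximum degree, which the corollary does not cover.
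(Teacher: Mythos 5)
Your proof is correct and is exactly the intended derivation: the paper gives no explicit proof of the corollary, but it follows from Theorem~\ref{2main} with $c=6$ and the bound $\mad(G)<\tfrac{10}{3}$ for planar graphs of girth at least $5$, choosing $\epsilon$ in the window $\bigl[24/\sqrt{\Delta(G)+6},\,\tfrac{2}{21}\bigr)$ precisely as you do. Your arithmetic (the coincidence $\tfrac{24}{7}-\tfrac{10}{3}=\tfrac{2}{21}=\tfrac{4}{c(c+1)}$ and the threshold $252^2=63504$) checks out, and you are right that the Wang--Lih bound plays no role here.
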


\section{Proof of Theorem \ref{2main}}
Let $c,\epsilon$ be as stated in Theorem \ref{2main}.
Let $K_c(G) = \max\{ \Delta(G) + c, 16 c^2 \epsilon^{-2} \}$.
Because $\epsilon < \frac{4}{c(c+1)}$, we have that $K_c(G) \geq 4c^2$.
We will show that $\chi_\ell^2(G) \leq K_c(G)$.

We use the notation ``$x$ is in conflict with $y$'' to say that $xy \in E(G^2)$.
We call a vertex \emph{massive} if it has degree at least $\sqrt{K_c(G)}$.
A vertex is \emph{type one} if it is massive, or has degree at least $3$ while being adjacent in $G$ to a massive vertex.
A vertex is \emph{type two} if it has degree at least three and it is not type one.
We will frequently use the fact that if $w$ is type two, then the number of vertices in conflict with $w$ is less than $K_c(G)$.

For a graph $G$, let $n_1(G)$ and $n_2(G)$ denote the number of vertices of type one and type two vertices in $G$, respectively.
We define $L(G) = (n_1(G) + n_2(G), e(G))$, and we order graphs lexicographically by $L(G)$.
That is, we say that $H$ is smaller than $G$ if 
\begin{enumerate}
	\item $n_1(H) + n_2(H) < n_1(G) + n_2(G)$, or
	\item $n_1(H) + n_2(H) =n_1(G) + n_2(G)$ and $e(H) < e(G)$.
\end{enumerate}
It is easy to see that if $H$ is a subgraph of $G$, then $H$ is smaller than $G$.
Furthermore, if $H$ is a subgraph of $G$, then $K_c(H) \leq K_c(G)$.
By way of contradiction, assume that $G$ is a counterexample to the theorem that is minimal by our ordering.
Because $c \geq 1$, it follows that $\delta(G) \geq 2$.

\begin{lemma} \label{2 by 2}
If $xy \in E(G)$, then $\max\{d(x), d(y)\} \geq 3$.
\end{lemma}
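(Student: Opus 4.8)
The plan is to argue by minimality of $G$. Suppose toward a contradiction that $xy \in E(G)$ with $d(x) = d(y) = 2$. The strategy is to produce a strictly smaller counterexample (or a direct coloring), contradicting the choice of $G$. Since $\delta(G) \geq 2$, both $x$ and $y$ have exactly one other neighbor; write $N(x) = \{y, x'\}$ and $N(y) = \{x, y'\}$. The natural reduction is to delete $x$ (or both $x$ and $y$), obtaining a subgraph $H$, which is smaller than $G$ by the remarks preceding the lemma, hence $H$ has a list coloring of $H^2$ from any lists of size $K_c(H) \leq K_c(G)$; we then try to extend this coloring to $x$.

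The key point is to count the vertices of $G^2$ that constrain $x$. The neighbors of $x$ in $G^2$ are exactly the vertices at distance $\leq 2$ from $x$ in $G$: these are $y$, $x'$, $y'$ (the distance-$2$ partners through $y$ and $x'$), and the other neighbors of $x'$. So the number of colors forbidden at $x$ is at most $1 + 1 + (d(x') - 1) + (d(y) - 1) = d(x') + d(y) = d(x') + 2$. Since we only need $K_c(G) \geq \Delta(G) + c \geq \Delta(G) + 3 > d(x') + 2$ available colors, there is always a free color for $x$ — unless deleting $x$ changed the type counts, i.e. unless $H$ is not actually ``smaller'' in the refined sense we need, or unless $x'$ or $y'$ was relying on $x,y$ to be type-one/type-two. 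One has to be slightly careful: removing $x$ could drop $d(x')$ below $3$, changing $x'$'s type, but that only decreases $n_1(H) + n_2(H)$, keeping $H$ smaller, which is fine; and it cannot hurt the color count.

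The one genuine subtlety — and the step I expect to be the main obstacle — is ensuring that after deleting $x$ we can still invoke minimality cleanly: we need $H$ to be a graph to which the theorem applies (it is, since $\mathrm{mad}(H) \leq \mathrm{mad}(G) < 4 - \tfrac{4}{c+1} - \epsilon$ and $H$ is smaller), and we need the coloring of $H^2$ to be valid on those vertices that remain in conflict in $G^2$. Deleting $x$ removes exactly the conflict edges incident to $x$ in $G^2$, so every conflict edge of $H^2$ is a conflict edge of $G^2$ and the inherited coloring is proper on $H^2$; conversely every conflict edge of $G^2$ not incident to $x$ survives in $H^2$ (here one checks that $y$ and $y'$, and $y$ and any other neighbor of $x'$ — wait, those go through $x$ — are still in conflict: $y$ and $y'$ conflict via the edge $yy'$ directly, and $y$ and $x'$ conflict only through $x$, so that pair must be handled when we color $x$, which the count above already does). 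So the plan is: delete $x$, apply minimality to $H$ to $K_c(G)$-list-color $H^2$, then greedily color $x$ using that $d(x') + 2 < K_c(G)$; this extends the coloring to all of $G^2$, contradicting that $G$ is a counterexample.

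One should double-check the boundary case where $x' = y'$ (so $x, y$ together with $x', y'$ form a configuration with fewer distinct vertices): the count of forbidden colors at $x$ only goes down, so the argument is unaffected. Likewise if $x'$ or $y'$ coincides with $y$ or $x$ respectively — impossible since $d(x) = d(y) = 2$ and the edge $xy$ is already accounted for, so $x' \neq y$ and $y' \neq x$ unless $G$ is a short cycle, which is excluded because a disjoint cycle would itself be a smaller counterexample handled trivially. Thus in every case we reach a contradiction, proving $\max\{d(x), d(y)\} \geq 3$ for every edge $xy$.
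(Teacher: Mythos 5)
Your overall strategy is the paper's: delete the degree-two configuration, invoke minimality to color the smaller graph with at most $K_c(G)$ colors, and extend greedily using the count $\Delta(G)+2 < \Delta(G)+c \le K_c(G)$ (this is where $c \ge 3$ enters). But the specific reduction you carry out --- deleting only $x$ --- has a genuine gap that you yourself brush against and then wave away. In $G^2$ the vertices $y$ and $x'$ are in conflict, and their \emph{only} common neighbor is $x$; hence in $(G-x)^2$ they are not adjacent, and the coloring you inherit from $H = G - x$ may well assign $y$ and $x'$ the same color. You write that ``that pair must be handled when we color $x$, which the count above already does,'' but coloring $x$ cannot repair a color clash between two \emph{other} vertices: no choice of color for $x$ makes the pair $\{y, x'\}$ properly colored in $G^2$. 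So the extension step, as written, does not produce a proper coloring of $G^2$.

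The paper avoids this by deleting \emph{both} $x$ and $y$ (which you mention parenthetically but do not pursue). Then every conflict of $G^2$ between two surviving vertices is already a conflict of $(G-x-y)^2$, because any pair whose only common neighbor is $x$ (respectively $y$) must lie in $N(x)=\{y,x'\}$ (respectively $N(y)=\{x,y'\}$) and so loses a member to the deletion; afterward each of $x$ and $y$ has at most $\Delta(G)+2 < K_c(G)$ conflicts and both are colored greedily. Your single-deletion version can also be repaired --- after coloring $G-x$, recolor $y$ (it is in conflict with at most $\Delta(G)+2$ vertices) to avoid $x'$ and its other conflicts, then color $x$ --- but some such step is needed; the claim that the $y$--$x'$ conflict is ``handled when we color $x$'' is the missing idea.
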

\begin{proof}
Suppose $d(x) = d(y) = 2$.
Let $G' = G - x - y$, and so by induction there exists a $2$-distance coloring of $G'$ using at most $K_c(G') \leq K_c(G)$ colors.
Because $c \geq 3$, each of $x$ and $y$ are in conflict with less than $K_c(G)$ vertices, and so our coloring of $G'$ can be extended greedily into a $2$-distance coloring of $G$.
This contradicts the fact that $G$ is not $2$-distance colorable.
\end{proof}

\begin{lemma} \label{small is type one}
If $3 \leq d(u) \leq c-1$, then $u$ is type one.
\end{lemma}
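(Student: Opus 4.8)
The plan is a reducibility argument against the minimal counterexample $G$. Suppose for contradiction that $u \in V(G)$ satisfies $3 \le d(u) \le c-1$ and is not type one; let $L$ be a list assignment, with each list of size $K_c(G)$, for which $G^2$ has no proper coloring. Since $K_c(G) \ge 4c^2$ we have $d(u) \le c-1 < 2c \le \sqrt{K_c(G)}$, so $u$ is not massive; as $u$ is not type one, no neighbor of $u$ is massive either, so $u$ is type two and every $v \in N(u)$ has $d(v) < \sqrt{K_c(G)}$. Put $G' = G - u$, a proper subgraph of $G$ and hence smaller in our order, with $K_c(G') \le K_c(G)$; by minimality $(G')^2$ has a proper $L$-coloring $\phi$.

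It remains to extend $\phi$ to a proper coloring of $G^2$. The only obstruction is that $N(u) = \{v_1, \dots, v_k\}$ with $k \le c-1$ is a clique of $G^2$ but need not be one of $(G')^2$, so $\phi$ may give the same color to two vertices of $N(u)$; once the colors on $N(u)$ are distinct, $u$ itself can be colored, since as a type two vertex it is in conflict with fewer than $K_c(G)$ vertices. So the task is to recolor some $v_i$ so that $\phi$ becomes proper on $G^2 - u$ and injective on $N(u)$. I would do this greedily: to recolor $v_i$ I need a color of $L(v_i)$ outside the current colors of the vertices in conflict with $v_i$ in $G$ that are not in $N(u)$, and outside the at most $k-1 \le c-2$ colors already chosen on $v_1, \dots, v_{i-1}$.

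When $v_i$ is adjacent to no massive vertex this is routine: the vertices in conflict with $v_i$ but outside $N(u)$ number fewer than $K_c(G)$ — each is a neighbor of $v_i$ or a neighbor of one of the fewer than $\sqrt{K_c(G)}$ neighbors of $v_i$, all of which are non-massive — and adding $c-2$ and using $\sqrt{K_c(G)} \ge 2c$ still leaves a usable color. The delicate case, which I expect to be the main obstacle, is when $v_i$ is adjacent to a massive vertex $w$ (in particular whenever $v_i$ is a type one neighbor of $u$): then $v_i$ can be in conflict with far more than $K_c(G)$ vertices and cannot simply be recolored. For this case I would avoid recoloring $v_i$ and instead use that (i) two neighbors of $u$ with a common massive neighbor are already adjacent in $(G')^2$, so any genuine collision $\phi(v_i) = \phi(v_j)$ forces $v_j \not\sim w$; and (ii) $N[w]$ is a clique of $G^2$ of size $d(w)+1 \le \Delta(G)+1 \le K_c(G)-c+1$, so at least $c-1$ colors are missing from $\phi(N[w])$. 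The goal is to deduce, from (i), (ii), the bound $d(u) \le c-1$, and the structure of the path $w, v_i, u, v_j$ realizing the collision, that every bad collision on $N(u)$ can be repaired by recoloring a partner that is adjacent to no massive vertex — or else that $G$ contains a strictly smaller reducible configuration, contradicting minimality. Carrying out this last part — above all, controlling the conflicts of a neighbor $v_i$ lying outside $N[w]$, and showing that a massive-free partner is always available — is where the real difficulty lies.
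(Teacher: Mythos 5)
Your reduction $G' = G - u$ correctly isolates the obstruction, and you have put your finger on exactly the right difficulty: two type one neighbors $v_i, v_j$ of $u$ may receive the same color under $\phi$, and neither can be greedily recolored, since a vertex adjacent to a massive vertex can be in conflict with far more than $K_c(G)$ vertices. But your proposed repair does not close this case. Observation (i) only rules out collisions between neighbors of $u$ sharing a \emph{common} massive neighbor; if $v_i$ is adjacent to a massive $w$ and $v_j$ to a different massive $w'$, both are type one, both are uncontrollable, and the collision survives. Observation (ii) — that $c-1$ colors are missing from $\phi(N[w])$ — does not help, because the conflicts of $v_i$ are not contained in $N[w]$: they include second neighborhoods through all of $v_i$'s other neighbors, so a color missing from $N[w]$ need not be available at $v_i$. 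You acknowledge that this is where the real difficulty lies, and it is: the argument as written does not terminate.

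The paper's resolution is to change the reduction itself rather than to repair the coloring afterward. Letting $Y$ be the set of type one neighbors of $u$, one deletes $u$ and inserts $\binom{|Y|}{2}$ new degree-$2$ vertices, one joined to each pair of vertices of $Y$. This gadget forces every pair of vertices in $Y$ to remain at distance $2$ in $G'$, so the inductive coloring of $G'$ already assigns them distinct colors and they never need to be touched; only $u$, its type two neighbors, and its degree-$2$ neighbors need recoloring, and each of these has fewer than $K_c(G)$ conflicts. The price of the gadget is that $G'$ is no longer a subgraph of $G$, so one must separately verify that $n_1 + n_2$ drops (it does: the $x_i$ have degree $2$, so they are neither type one nor type two, while $u$ was type two) and — the genuinely delicate part — that $\mad(G') < 4 - \frac{4}{c+1} - \epsilon$ still holds. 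The paper does the latter with a potential function $\rho_G(S) = (2 - \frac{2}{c+1} - \frac{\epsilon}{2})|S| - e(G[S])$, comparing a minimizing set $S'$ in $G'$ with a corresponding set in $G$, and here the hypotheses $d(u) \leq c-1$ and $\epsilon < \frac{4}{c(c+1)}$ are used quantitatively. Both the gadget and this density verification are absent from your proposal, and without some substitute for them the lemma is not proved.
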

\begin{proof}
By way of contradiction, assume that $d(u) \leq c-1$ and $u$ is type two. 
Let $Y \subseteq N(u)$ be the set of neighbors of $u$ that are type one, and let $y = |Y|$.
Let $G'$ be $G$ with $u$ removed and replaced with ${y \choose 2}$ vertices $x_i$, such that the neighborhoods of the $x_i$ form the subsets of $Y$ of order $2$.
It is clear that $n_1(G') \leq n_1(G)$ and $n_2(G') \leq n_2(G) - 1$, and so $G'$ is smaller than $G$ in our ordering.
By induction, there exists a $2$-distance coloring of $G'$ using at most $K_c(G')$ colors.

We claim that $K_c(G') \leq K_c(G)$.
There are two possibilities that we must account for: if (1) $\Delta(G') + c > K_c(G)$ or (2) $\mad(G') > \mad(G)$.
The only vertices with a larger degree in $G'$ than in $G$ are those in $Y$.
By assumption, each $v \in Y \subseteq N(u)$ satisfies $d_G(v) \leq \sqrt{K_c(G)}$.
By construction, $d_{G'}(v) - d_G(v) = y - 2 \leq d(u) < c$, and so $d_{G'}(v) \leq \sqrt{K_c(G)} + c < K_c(G)$.
This proves that (1) may not happen; now we concern ourselves with (2).

For $S \subseteq V(G)$, let $\rho_G(S) = (2 - 2(c+1)^{-1} - \epsilon/2)|S| - e(G[S])$.
Note that $\mad(G) < 4 - 4(c+1)^{-1} - \epsilon$ is equivalent to $\rho_G(S) > 0$ for all $\emptyset \neq S \subseteq V(G)$.
By way of contradiction, suppose that there exists a non-empty $S'$ such that $\rho_{G'}(S') \leq 0$.
Without loss of generality, assume that $S'$ minimizes $\rho_{G'}(S')$ among all such sets; this implies that $\delta(G'[S']) \geq 2$.
Moreover, $x_i \in S'$ if and only if $N(x_i) \subset S'$.
Let $X' = N(u) \cap S'$, let $s = |X'|$, and let $S = S' \cap V(G)$ if $s \leq 1$ and $S = S' \cap V(G) + u$ otherwise.
If $s \leq 2$, then $G[S] \cong G'[S']$ and therefore $\rho_{G'}(S') = \rho_G(S)$.
If $s \geq 3$, then
\begin{small}
\begin{eqnarray*}
\rho_{G'}(S') - \rho_G(S) 	&=&	\left( (2-2(c+1)^{-1} - \epsilon/2) {s \choose 2} - 2 {s \choose 2}\right) - ((2 - 2(c+1)^{-1} - \epsilon/2)-s) \\
				& = &	\left( \frac{2}{c+1} + \frac{\epsilon}2\right)\left(- {s \choose 2} + 1\right) + s - 2 \\
				&\geq&	(s-2)\left(1-\left(\frac{2}{c+1} + \frac{\epsilon}2\right)  \frac{s+1}2 \right)
\end{eqnarray*}
\end{small}
Because $\epsilon < \frac{4}{c(c+1)}$ and $s \leq d(u) \leq c-1$, we see that $0 \geq \rho_{G'}(S') \geq \rho_G(S)$ for any value of $s$.
This is a contradiction, and so (2) never happens.
Therefore our claim that $K_c(G') \leq K_c(G)$ is true.

So we have a $2$-distance coloring on $G'$ using at most $K_c(G)$ colors.
By construction, no conflicting pairs of vertices in $G - (N[u] - Y)$ share a color.
Every vertex in $N[u] - Y$ is either type two or has degree $2$ while being adjacent to a vertex with degree less than $c$; therefore every vertex in $N[u] - Y$ has less than $K_c(G)$ conflicts. 
It follows that they can be greedily recolored so that they do not share a color with any vertex they are in conflict with.
This contradicts that $G$ is not $2$-distance colorable with $K_c(G)$ colors.
\end{proof}

\begin{lemma} \label{either big or both type ones}
If $N(u) = \{x,y\}$ and $x$ is not type one, then $y$ is massive.
\end{lemma}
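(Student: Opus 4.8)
The plan is a straightforward vertex-deletion argument, the whole force of which comes from translating the hypothesis ``$x$ is not type one'' into a degree bound. First I would apply Lemma~\ref{2 by 2} to the edges $ux$ and $uy$: since $d(u) = 2$, this forces $d(x) \geq 3$ and $d(y) \geq 3$. Now $x$ has degree at least $3$ and is not type one, so by definition $x$ is type two; in particular $x$ is not massive, and hence $d(x) < \sqrt{K_c(G)}$. This is the key step — once we know $d(x)\ge 3$, the assumption that $x$ is not type one is exactly the statement $d(x) < \sqrt{K_c(G)}$.

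Next I would argue by contradiction: suppose $y$ is also not massive, so $d(y) < \sqrt{K_c(G)}$ as well. Let $G' = G - u$. Since $G'$ is a subgraph of $G$, it is smaller than $G$ in our ordering and satisfies $K_c(G') \leq K_c(G)$, so by the minimality of $G$ there is a $2$-distance list-coloring of $G'$ using lists of size $K_c(G')$. It remains to assign a color to $u$. Every vertex in conflict with $u$ is either a neighbor of $u$ or shares a common neighbor with $u$; as $N(u) = \{x,y\}$, the number of such vertices is at most $d(x) + d(y) < 2\sqrt{K_c(G)} < K_c(G)$, where the last inequality uses $K_c(G) \geq 4c^2 \geq 36$. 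Hence the list of $u$ contains a color used by no vertex in conflict with $u$, and choosing it extends the coloring to all of $G$, contradicting that $G$ is a counterexample. Therefore $y$ is massive.

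I do not expect a genuine obstacle here; the only point that needs a word of care is that removing $u$ does not increase $n_1 + n_2$, but that is already subsumed in the observation (made just before Lemma~\ref{2 by 2}) that a subgraph of $G$ is smaller than $G$ in the lexicographic ordering $L$. Everything else is a one-line greedy extension once both $d(x)$ and $d(y)$ are known to be below $\sqrt{K_c(G)}$.
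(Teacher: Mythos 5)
There is a genuine gap in your extension step. When you pass to $G' = G - u$, you delete the common neighbor of $x$ and $y$, so the conflict $xy \in E(G^2)$ (which exists precisely because $x$ and $y$ share the neighbor $u$) may not survive in $G'^2$. The $2$-distance coloring of $G'$ that induction hands you is therefore free to assign $x$ and $y$ the same color, and if it does, no choice of color for $u$ can repair this: the resulting coloring of $G$ fails on the edge $xy$ of $G^2$ no matter what. Your accounting of the conflicts of $u$ (at most $d(x)+d(y) < 2\sqrt{K_c(G)} < K_c(G)$) is correct, but it only guarantees that $u$ itself can be colored; it says nothing about the pair $x,y$.

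The paper closes exactly this hole with a two-step extension: before coloring $u$, it recolors $x$ so that $x$ avoids the color of $y$ as well as the colors of all vertices in conflict with $x$ in $G$. This is where the hypothesis that $x$ is type two does real work beyond the degree bound $d(x) < \sqrt{K_c(G)}$ that you extracted from it: a type two vertex has fewer than $K_c(G)$ conflicts, so its list of size $K_c(G)$ contains an admissible color. (A minor further imprecision: given $d(x)\ge 3$, ``$x$ is not type one'' is not \emph{exactly} $d(x) < \sqrt{K_c(G)}$; it also asserts that $x$ has no massive neighbor. That extra information is not needed here, but the equivalence you claim is not quite right.) With the recoloring step inserted, your argument matches the paper's.
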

\begin{proof}
By way of contradiction, assume that $x$ is not type one and $d(y) \leq \sqrt{K_c(G)}$.
By Lemma \ref{2 by 2}, $d(x) \geq 3$, and so $x$ is type two.
Let $G' = G - u$, and so by induction there exists a $2$-distance coloring of $G'$ using at most $K_c(G') \leq K_c(G)$ colors.
We will extend this coloring in two steps to a $2$-distance coloring of $G$, which is a contradiction.
First, we re-color $x$ so that it does not share the same color with $y$ (and this is possible because $x$ is type two).
Second, we color $u$, which has at most $2\sqrt{K_c(G)} < K_c(G)$ conflicts.
\end{proof}

\begin{lemma} \label{small meets small}
Let $N(u) = \{x,y,z_3, z_4, \ldots, z_m\}$, where $d(z_i) = 2$ for each $i$ and $3 \leq m = d(u) \leq c-1$.  
Under these conditions, $m + d(x) \geq c+2$.
\end{lemma}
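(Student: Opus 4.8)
Suppose toward a contradiction that $m+d(x)\le c+1$; then $d(x)\le c+1-m\le c-2$. For $3\le i\le m$ let $w_i$ denote the neighbor of $z_i$ other than $u$. My plan is to delete the degree‑$2$ vertices $z_3,\dots,z_m$, $2$‑distance‑list‑color the rest of the graph by minimality, and then repair the coloring in the neighborhood of $u$; the inequality $m+d(x)\le c+1$ will be used exactly once, to guarantee a free color at $u$.

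So let $G'=G-z_3-\cdots-z_m$. As a subgraph of $G$, $G'$ is smaller than $G$ and satisfies $K_c(G')\le K_c(G)$, so by minimality $G'$ has a $2$‑distance coloring $\phi$ drawn from the given lists (of size $K_c(G)\ge K_c(G')$). Since $N_{G'}(u)=\{x,y\}$, the colors $\phi(u),\phi(x),\phi(y)$ are pairwise distinct. I would then recolor $u$: every vertex in conflict with $u$ in $G$ other than $z_3,\dots,z_m$ lies in $\{x,y\}\cup(N(x)\setminus\{u\})\cup(N(y)\setminus\{u\})\cup\{w_3,\dots,w_m\}$, a set of size at most
\[
d(x)+d(y)+m-2 \;=\; \bigl(m+d(x)\bigr)+d(y)-2 \;\le\; d(y)+c-1 \;\le\; \Delta(G)+c-1 \;<\; K_c(G),
\]
and since the $z_i$ are currently uncolored at most this many colors are forbidden at $u$, so a color remains. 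A color chosen this way keeps $\phi$ proper on $(G')^2$ and in addition forces $\phi(u)\ne\phi(w_i)$ for every $i$. Finally I would color $z_3,\dots,z_m$ one at a time: the vertices in conflict with $z_i$ in $G$ all lie in $\{u,w_i,x,y\}\cup\{z_j:3\le j\le m,\ j\ne i\}\cup(N(w_i)\setminus\{z_i\})$, of size at most $m+d(w_i)\le(c-1)+\Delta(G)<K_c(G)$, so when $z_i$ is reached a color is available for it.

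It then remains to verify that the coloring of $V(G)$ so obtained is proper on $G^2$. Two vertices of $V(G')$ can be in conflict in $G$ without being in conflict in $(G')^2$ only by having a deleted common neighbor: a common neighbor $u$ yields only the pair $\{x,y\}$, handled because $\phi(x)\ne\phi(y)$, while a common neighbor $z_i$ yields no such pair (its only other neighbor is $w_i$). A conflict $\{u,w_i\}$ arising at distance $2$ through $z_i$ was taken care of when $u$ was recolored, and every conflict incident to some $z_i$ was taken care of when $z_i$ — or a later $z_j$ — received its color. Hence $\phi$ is a $2$‑distance coloring of $G$ using colors from lists of size $K_c(G)$, contradicting the choice of $G$, and therefore $m+d(x)\ge c+2$. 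I expect the only delicate point to be the recoloring of $u$: coloring $u$ last instead would in general fail, since with the $z_i$ in place $u$ can be in conflict with as many as $2m+d(x)+d(y)-4\le\Delta(G)+2c-4$ vertices, which need not be fewer than $K_c(G)$ once $c\ge4$; recoloring $u$ while the $z_i$ are absent trims the forbidden set to $d(x)+d(y)+m-2$, and this is the one place where $m+d(x)\le c+1$ enters. Possible coincidences among the $w_i$, or of some $w_i$ with $x$ or $y$, only shrink the sets counted above, so they cause no difficulty.
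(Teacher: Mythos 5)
Your proof is correct and follows essentially the same strategy as the paper's: assume $m+d(x)\le c+1$, pass to a smaller graph by deleting degree-$2$ neighbors of $u$, recolor $u$ to avoid the colors on $N[x]\cup N[y]\cup\{w_3,\dots,w_m\}$ (the one place the hypothesis is used, giving at most $\Delta(G)+c-1<K_c(G)$ forbidden colors), and then handle the $z_i$ greedily. The only cosmetic difference is that the paper deletes just $z_3$ and recolors any remaining $z_i$ that clashes with $u$'s new color, whereas you delete all of $z_3,\dots,z_m$ and color them afresh.
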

\begin{proof}
By way of contradiction, assume that $d(x) + m \leq c + 1$.
Let $N(z_i) = \{u,v_i\}$ for each $i$.
Let $G' = G - z_3$, and so by induction there exists a $2$-distance coloring of $G'$ using at most $K_c(G') \leq K_c(G)$ colors.
We will extend this coloring in two steps to a $2$-distance coloring of $G$, which is a contradiction.
Our first step is to color $u$ so that it does not have the same color as any vertex (besides itself) in $N[x] \cup N[y] \cup (v_i)_{3 \leq i \leq m}$.
There are at most $\Delta(G) + d(x) + m - 2$ conflicts in that set, and by assumption this is less than $K_c(G)$.
The second step is to recolor each $z_i$ that has the same color as $u$.
Each $z_i$ has at most $\Delta(G) + d(u) < K_c(G)$ conflicts, and so this is possible.
\end{proof}

We are now prepared to describe the discharging procedures.
Each vertex begins with charge equal to its degree.
In the end, we will show that each vertex has final charge at least $4 - \frac{4}{c+1} - \epsilon$, which contradicts that $\mad(G) < 4 - \frac{4}{c+1} - \epsilon$.

\begin{enumerate}
	\item If $u$ is type one and $d(u) < c$, then $u$ gives $1-\frac{2}{c+1}$ charge to each neighbor with degree $2$.
	\item If $\frac{c+2}2 \leq d(u) \leq c-1$, then $u$ gives $\frac{2d(u)}{c+1} - 1$ charge to each neighbor whose degree is at least $3$ and less than $c$.
	\item If $u$ is type one and $c \leq d(u) \leq \sqrt{K_c(G)}$, then $u$ gives $1-\frac{2}{c+1}$ charge to each neighbor that is not massive.
	\item If $u$ is type two, then $u$ gives $1 - \frac{4}{c+1} + \frac{\epsilon}c$ charge to each neighbor.
	\item If $u$ is massive, then $u$ gives $1 - \epsilon/c$ charge to each neighbor.  
\end{enumerate}

We now calculate a lower bound on the final charge for each vertex.
\begin{enumerate}
	\item If $d(u) = 2$, then by Lemma \ref{2 by 2} both neighbors of $u$ have degree at least $3$.  If both are type one, then the final charge on $u$ is at least $2 + 2(1 - \frac{2}{c+1})$.  If one of them is type two, then by Lemma \ref{either big or both type ones} the other neighbor is massive and therefore the final charge on $u$ is $2 + (1 - \frac{4}{c+1} + \frac{\epsilon}c) + (1 - \epsilon/c)$.
	\item Suppose $3 \leq d(u) \leq c-1$ and $u$ is adjacent to at least $c-2$ neighbors with degree $2$.  By Lemma \ref{small is type one} one of the neighbors is massive, by Lemma \ref{small meets small} a second neighbor has degree at least $c+2-d(u)$, and so $u$ has exactly $c-2$ neighbors with degree $2$.  Because $\frac{2(c+1-d(u))}{c+1} - 1 = -(\frac{2d(u)}{c+1} - 1)$, we have that the net transfer of charge between $u$ and that second neighbor is that $u$ ``receives'' at least $1 - \frac{2d(u)}{c+1}$ charge, regardless of whether $d(u) \geq \frac{c+2}2$ or $d(u) < \frac{c+2}2$.  So the final charge on $u$ is at least $d(u) + (1-\epsilon/c) + ( 1 - \frac{2d(u)}{c+1}) - (d(u) - 2)(1 - \frac{2}{c+1}) =  4 - \frac{4}{c+1} - \epsilon/c$.
	\item Suppose $3 \leq d(u) \leq c-1$ and $u$ is adjacent to at most $c-3$ neighbors with degree $2$. By Lemma \ref{small is type one} one of the neighbors of $u$ is massive.  
	\begin{itemize}
		\item If $d(u) \geq \frac{c+1}2$, then the final charge on $u$ is at least $d(u) + (1 - \epsilon/c) - (d(u)-3)(1 - \frac{2}{c+1}) - 2(\frac{2d(u)}{c+1}-1) = 6 - \frac{2(3 + d(u))}{c+1} - \epsilon/c$.  Because $d(u) \leq c-1$, this is greater than $ 4 - \frac{4}{c+1} - \epsilon$.
		\item If $d(u) < \frac{c+1}2$, then the final charge on $u$ is at least $d(u) + (1 - \epsilon/c) - (d(u)-3)(1 - \frac{2}{c+1}) = 4 - \frac{2(3 - d(u))}{c+1} - \epsilon/c$.  Because $d(u) \geq 3$, this is greater than $ 4 - \frac{4}{c+1} - \epsilon$.
	\end{itemize}
	\item If $c \leq d(u) \leq \sqrt{K_c(G)}$ and $u$ is type one, then the final charge on $u$ is at least $d(u) + (1-\epsilon/c) - (d(u) - 1)(1-\frac{2}{c+1}) = 2 + \frac{2d(u)-2}{c+1} - \epsilon/c$.
	\item If $u$ is type two, then Lemma \ref{small is type one} states that $d(u) \geq c$.  So the final charge on $u$ is at least $d(u)(\frac{4}{c+1} - \epsilon/c) \geq 4 - \frac{4}{c+1} - \epsilon$.
	\item If $u$ is massive, then the final charge on $u$ is at least $\sqrt{K_c(G)}\epsilon c^{-1} \geq 4$.
\end{enumerate}



\begin{thebibliography}{1}

	\bibitem{BLP}	M. Bonamy, B. L\'ev\^{e}que, and A. Pinlou, ``$2$-distance Coloring of Sparse Graphs,'' \textit{Journal of Graph Theory} \textbf{77} 3 (2014), 190 -- 218.

	\bibitem{BLP2}	M. Bonamy, B. L\'ev\^{e}que, and A. Pinlou, ``List coloring the square of sparse graphs with large degree,'' \url{http://arxiv.org/abs/1308.4197} .

	\bibitem{BGINT} O. Borodin, A. Glebov, A. Ivanova, T. Neustroeva, and V. Tashkinov, ``Sufficient Conditions for the $2$-distance $(\Delta+1)$-colorability of plane graphs.'' \textit{Siberian Electronic Mathematical Reports} \textbf{1} (2004), 129--141.

	\bibitem{BIN} O. Borodin, A. Ivanova, and T. Neustroeva, ``$2$-distance coloring of sparse plane graphs.'' \textit{Siberian Electronic Mathematical Reports} \textbf{1} (2004), 76 -- 90.

	\bibitem{CS} D. Cranston and R. \v{S}krekovski, ``Sufficient sparseness conditions for $G^2$ to be $(\Delta+1)$-choosable, when $\Delta \geq 5$.'' \textit{Discrete Applied Mathematics} \textbf{162} (2014), 167--176.

	\bibitem{DS} M. Dolama and `'E. Sopena, ``On the maximum average degree and the incidence chromatic number of a graph,'' \textit{Discrete Math. Theor. Comut. Sci.} \textbf{7} (2005), 203 -- 216.

	\bibitem{DKNS} Z. Dvo\v{r}\'ak, D. Kr\'{a}\soft{l}, P. Nejedl\'{y}, and R. \v{S}krekovski, ``Coloring squares of planar graphs with girth six,'' \textit{European Journal of Combinatorics} \textbf{29} (2008), 838--849.

	\bibitem{KP}  S. Kim and B. Park, ``Counterexamples to the list square coloring conjecture,'' \url{http://arxiv.org/abs/1305.2566} .

	\bibitem{KW} A. Kostochka and D. Woodall, ``Choosability conjectures and multicircuits,'' \textit{Discrete Math.} \textbf{240}, 1–3 (2001), 123--143.

	\bibitem{WL} F. Wang and K Lih, ``Labeling planar graphs with conditions on girth and distance two,'' \textit{SIAM Journal of Disc. Math.} \textbf{17}, 2 (2003), 264 -- 275.

	\bibitem{W} G. Wegner, ``Graphs with given diameter and a coloring problem,'' Technical Report, University of Dortmund, 1977.

\end{thebibliography}
\end{document}